\newcommand{\dd}{\mathrm{d}}
\author{Laura Desideri%
\thanks{Universit\"at T\"ubingen, Mathematisches Institut, Auf der Morgenstelle 10, 72 076 T\"ubingen, Germany. Tel.: 0049 7071 29 77 459. E-mail: \texttt{desideri@math.jussieu.fr}} 
 and Ruben Jakob%
\thanks{Universit\"at T\"ubingen, Mathematisches Institut, Auf der Morgenstelle 10, 72 076 T\"ubingen, Germany. E-mail: \texttt{jakob@mail.mathematik.uni-tuebingen.de}}}
\title{Immersed solutions of Plateau's problem for
piecewise smooth boundary curves with small total curvature}  
\begin{document}

\theoremstyle{plain}
\newtheorem{theorem}{Theorem}
\newtheorem{proposition}{Proposition}
\newtheorem{lemma}{Lemma}
\newtheorem{corollary}{Corollary}

\theoremstyle{definition}
\newtheorem{definition}{Definition}
\newtheorem*{remark}{Remark}

\maketitle

\begin{abstract}
We provide a new proof of the classical result that any closed rectifiable Jordan curve $\Gamma \subset \mathbb{R}^3$ being piecewise of class $C^2$ bounds at least one \emph{immersed} minimal surface of disc-type, under the additional assumption that the total curvature of $\Gamma$ is smaller than 
$6 \pi$. In contrast to the methods due to Osserman~\cite{Osserman}, Gulliver~\cite{Gulliver} and Alt~\cite{Alt1}, \cite{Alt2}, our proof relies on a polygonal approximation technique, using the existence of immersed solutions of Plateau's problem for polygonal boundary curves, provided by the first author's accomplishment~\cite{Desideri} of Garnier's ideas in~\cite{Garnier}.  
\end{abstract}

\section{Main result and introduction}

Given a closed rectifiable Jordan curve $\Gamma$, the classical solution to Plateau's problem obtained by Douglas~\cite{Douglas} and Rad\'o~\cite{Rado} in the early 1930's yields the existence of a \emph{generalized} minimal surface of disc-type spanning $\Gamma$: its interior is an immersed surface, except possibly at a finite number of branch points. It has been a famous and long out-standing problem whether in fact such branch points actually occur. Entirely immersed solutions to Plateau's problem were finally achieved in the 1970's by Osserman~\cite{Osserman}, Gulliver~\cite{Gulliver} and Alt~\cite{Alt1}, \cite{Alt2}. In this paper we provide a new proof of this now classical result:

\begin{theorem}  \label{Main theorem}
Any closed rectifiable Jordan curve $\Gamma \subset \mathbb{R}^3$ 
being piecewise of class $C^2$ and with total curvature smaller than 
$6 \pi$ bounds at least one \emph{immersed} minimal surface of disc-type. 
\end{theorem}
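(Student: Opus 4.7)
The strategy, as suggested by the abstract, is to reduce the theorem to the polygonal case via approximation. Concretely: (i) approximate $\Gamma$ by polygonal Jordan curves $\Gamma_n$ with total curvature $TC(\Gamma_n) < 6\pi$ and $\Gamma_n \to \Gamma$ in a strong enough topology; (ii) apply the first author's theorem \cite{Desideri} to obtain, for each $n$, an immersed minimal disc $X_n$ spanning $\Gamma_n$; (iii) extract a limit $X$ spanning $\Gamma$; (iv) show that $X$ has no interior or boundary branch points.

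For step (i), I would inscribe polygons through finer and finer subdivisions of an arclength parametrization of $\Gamma$, placing subdivision points at every corner of $\Gamma$ so that the angular defects transfer faithfully; a standard estimate then gives $TC(\Gamma_n) \to TC(\Gamma)$, hence $TC(\Gamma_n) < 6\pi$ for all $n$ large enough. Step (ii) is a direct application of Desideri's construction. For step (iii), I would normalize each $X_n$ by a three-point condition and use the standard Plateau-problem machinery --- an isoperimetric inequality for uniform Dirichlet-energy bounds, the Courant--Lebesgue lemma for equicontinuity up to the boundary, and boundary regularity for minimal surfaces with piecewise smooth boundary --- to extract a subsequence converging uniformly on $\bar{D}$ and smoothly on compacta of $D$ to a limit $X$, readily identified as a conformal harmonic parametrization of a disc-type minimal surface spanning $\Gamma$.

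The main obstacle is step (iv). A naive Hurwitz-type argument applied to the holomorphic generating maps $\phi_n := \partial_z X_n \colon D \to \mathbb{C}^3$ does \emph{not} work: although each $\phi_n$ is non-vanishing (by immersion of $X_n$), $\phi_n$ takes values in the isotropic quadric $\{w \in \mathbb{C}^3 : w_1^2 + w_2^2 + w_3^2 = 0\}$, and one can exhibit sequences of non-vanishing isotropic holomorphic curves --- for instance $\phi_n(z) = \bigl(\tfrac{1}{n} + \tfrac{z^2}{2},\; i(\tfrac{1}{n} - \tfrac{z^2}{2}),\; i\sqrt{2/n}\,z\bigr)$ on the unit disc --- whose limit does vanish. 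So the total curvature bound must be used essentially. The approach I would follow is Gauss--Bonnet based: each immersed $X_n$ satisfies $\int_{X_n}\lvert K\rvert\,\dd A \le TC(\Gamma_n) - 2\pi < 4\pi$, so its Gauss-map image has spherical area (with multiplicity) strictly below $4\pi$. Combining this uniform bound with a quantitative analysis of how much Gauss curvature can concentrate near a putative limit branch point, together with Gulliver--Lesley-type boundary regularity on the smooth arcs and a local asymptotic analysis at the finitely many corners of $\Gamma$, should force the limit $X$ to be immersed.

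The technical heart of the argument therefore lies in step (iv): quantitatively converting the strict inequality $TC(\Gamma) < 6\pi$ into the absence of branch points in the limit, most delicately at the corners, where the total-curvature budget of $2\pi$ left over after the disc's intrinsic $2\pi$ and an ``interior branch cost'' of $4\pi$ is consumed by the angular defects of $\Gamma$.
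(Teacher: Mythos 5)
Your plan is essentially the paper's proof: polygonal approximation preserving length and total curvature up to $\varepsilon$, Desideri's theorem for immersed polygonal solutions, a three-point normalization plus isoperimetric bound and Courant--Lebesgue compactness for the limit, and then the Gauss--Bonnet bound $\int_B |K_{X^n}|E^n\,\dd u\dd v \le \mathrm{TC}(P^n)-2\pi < 4\pi-\varepsilon$ to exclude branch points of the limit. The one ingredient you defer --- the ``quantitative analysis of how much Gauss curvature can concentrate near a putative limit branch point'' --- is exactly what the paper imports as a black box, namely Sauvigny's compactness theorem (Theorem~1,(ii) in \cite{Sauv1}), which states that a $C^0(\bar B)\cap C^2_{loc}(B)$-limit of minimal immersions with total absolute Gauss curvature uniformly bounded by some $e_0<4\pi$ has no interior branch points; note also that the paper's notion of ``immersed'' concerns only the open disc, so your worries about boundary branch points and the corners of $\Gamma$ are not needed, while the polygonal Gauss--Bonnet identity must be taken in Sauvigny's form (Proposition~\ref{Sauvignys_Gauss_Bonnet}) to account for possible boundary branch points of the $X^n$ at the vertices, which only improve the estimate.
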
 

Here a minimal surface of disc-type is a (non-constant) map 
$X:\bar B \rightarrow \mathbb{R}^3$,
defined on the closure of the unit disc $B=\{w = (u,v) \in \mathbb{R}^2 \mid \  |w| <1 \}$, of class $C^0(\bar B,\mathbb{R}^3)\cap C^2(B,\mathbb{R}^3)$ which is harmonic and conformally parametrized on $B$, i.e. which meets
\begin{displaymath}
\triangle X = 0 \quad \textnormal{and} \quad 
|X_u|^2-|X_v|^2 =0= \langle X_u,X_v \rangle \quad \textnormal{on } B.
\end{displaymath}
It is termed \emph{immersed} if it additionally satisfies $|X_u|>0$ on $B$,
i.e. if it does not have any branch point on $B$ (see Definition  
\ref{branch points}).
Moreover for any given closed Jordan curve $\Gamma \subset \mathbb{R}^3$ we shall abbreviate by $\mathcal{C}(\Gamma)$ the set of those surfaces 
$X \in C^0(\bar B,\mathbb{R}^3) \cap H^{1,2}(B, \mathbb{R}^3)$ which are \emph{bounded} by $\Gamma$, thus whose restrictions $X\mid_{\partial B}$ map $\partial B$ continuously and weakly monotonically onto $\Gamma$ with mapping degree $1$. 
Finally a closed Jordan curve $\Gamma \subset \mathbb{R}^3$ is termed ``piecewise of class $C^2$'' if there is a homeomorphic 
parametrization $\gamma :[0,2\pi)  \stackrel{\cong }
\longrightarrow \Gamma $ of $\Gamma $ which is twice continuously
differentiable in every point of $[0,2\pi)$ with the exception 
of at most finitely many points $0\leq t_1< \ldots <t_{m}<2\pi$
in which there still exist the one-sided derivatives 
$\dot \gamma (t_i+) := \lim_{t \searrow t_i} \frac{\gamma(t)-\gamma(t_i)}
{t-t_i}$ and $\dot \gamma (t_i-) := \lim _{t \nearrow t_i} 
\frac{\gamma(t_i)-\gamma(t)}{t_i-t}$.

\vspace{0.3cm}

The central tool of our proof of Theorem~\ref{Main theorem}
is the following existence result for Plateau's problem in the 
case of polygonal boundary curves, which could finally be achieved by the first author in \cite{Desideri} (see the Main Theorem on p. 9), by accomplishing Garnier's examination~\cite{Garnier} of second-order Fuchsian differential equations defined on the Riemann sphere $\mathbb{C} \cup \{\infty\}$: the method relies on the resolution of the Riemann--Hilbert problem and on isomonodromic deformations of first-order Fuchsian systems, which are given by the Schlesinger system.  

\begin{theorem}  \label{Lauras_Theorem}
Let $P \subset \mathbb{R}^3$ be a closed polygon in generic position, 
with vertices $A_1,\ldots,A_{N+3}$ ($N>0$). Then there is some 
\emph{immersed} minimal surface of disc-type which is bounded by 
$P$ and maps the three points $-1,-i,1$ onto the last three vertices 
$A_{N+1}, A_{N+2}, A_{N+3}$ of $P$. 
\end{theorem}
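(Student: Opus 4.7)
The plan is to exploit the Weierstrass representation and reduce the construction of an immersed minimal disc bounded by $P$ to a problem in the theory of Fuchsian differential equations on the Riemann sphere. Recall that any minimal disc $X:\bar B \to \mathbb{R}^3$ can be written as $X(w) = \operatorname{Re}\int^{w} (\phi_1,\phi_2,\phi_3)\,\dd\zeta$, where $\phi_1,\phi_2,\phi_3$ are holomorphic on $B$ with $\phi_1^2+\phi_2^2+\phi_3^2 \equiv 0$, and $X$ is immersed precisely when $\phi_1,\phi_2,\phi_3$ do not vanish simultaneously. After conformally identifying $B$ with the upper half-plane, one arranges the preimages of the vertices $A_1,\ldots,A_{N+3}$ to be $t_1<\cdots<t_{N+2}$ on $\mathbb{R}$ together with $t_{N+3}=\infty$; each $\phi_j$ then extends by Schwarz reflection across the boundary arcs and acquires prescribed fractional branching at each $t_i$ whose exponents are determined by the exterior angle of $P$ at $A_i$.

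The key device I would employ is to represent the null curve $(\phi_1,\phi_2,\phi_3)$ through quadratic expressions in a fundamental system of solutions of a scalar second-order Fuchsian ODE on $\mathbb{C}\cup\{\infty\}$ with regular singularities exactly at the $t_i$. The local exponents at $t_i$ are pinned down by the exterior angle of $P$ at $A_i$, while the global monodromy representation is forced to factor through a real form (so that $X$ is $\mathbb{R}^3$-valued) and to satisfy conjugacy constraints imposed by the requirement that $X|_{\partial B}$ parametrize the actual polygon $P$. By the Riemann--Hilbert correspondence one then obtains a family of Fuchsian systems realising such monodromy data, parametrised by the positions $(t_1,\ldots,t_{N+2})$ together with a number of accessory parameters that matches the geometric degrees of freedom of $P$ modulo rigid motions of $\mathbb{R}^3$.

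Two conditions must be met simultaneously to recover the prescribed $P$: the period conditions, which ensure that the image has edges of the correct lengths and directions, and the immersion condition, which requires $(\phi_1,\phi_2,\phi_3)$ to have no common zero on $B$. The period conditions are addressed via isomonodromic deformation: varying the singular points $(t_1,\ldots,t_{N+2})$ while holding the monodromy fixed forces the accessory parameters to evolve according to the Schlesinger system (equivalently a Garnier Hamiltonian system), and one would close the argument by a continuity or degree-theoretic analysis of the resulting map from the deformation space to the configuration space of admissible polygons, using the genericity hypothesis on $P$ to ensure regular values.

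The step I expect to be the main obstacle is verifying the interior immersion condition, i.e.\ ruling out simultaneous zeros of $\phi_1,\phi_2,\phi_3$ on $B$. A pure Riemann--Hilbert construction delivers the correct monodromy and boundary behaviour but says nothing a priori about common zeros of the quadratic differentials in the interior, and such zeros would correspond precisely to interior branch points of the Douglas--Rad\'o-type solution. Controlling them requires exploiting the specific form in which $\phi_j$ appears as a quadratic polynomial in solutions of the Fuchsian equation, combined with the local exponent data at the $t_i$ and a global argument tied to the winding of the boundary curve around $P$ -- this is precisely the point at which the algebraic flexibility of Garnier's framework, as carried out in~\cite{Garnier,Desideri}, provides additional rigidity that is not available in variational treatments.
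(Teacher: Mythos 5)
The paper does not prove Theorem~\ref{Lauras_Theorem} at all: it is imported verbatim from the first author's memoir \cite{Desideri}, and the introduction only summarizes the method used there (Weierstrass data governed by a second-order Fuchsian equation, the Riemann--Hilbert problem, and isomonodromic deformations via the Schlesinger system). Your sketch reproduces exactly that strategy, so in terms of \emph{approach} you are aligned with the cited proof rather than proposing an alternative.

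However, as a proof your proposal has two genuine gaps, and they are precisely the two points where Garnier's original argument \cite{Garnier} was never accepted as complete. First, the period/boundary problem: you say the prescribed polygon is recovered ``by a continuity or degree-theoretic analysis of the resulting map from the deformation space to the configuration space of admissible polygons,'' but you give no compactness or properness statement for that map, no description of how the accessory parameters are initialized (in \cite{Desideri} this is done by a delicate degeneration/confluence argument reducing to polygons with fewer vertices), and no argument that the monodromy constraints forced by reality and by the reflection relations across the edges are simultaneously realizable. Without these, ``degree-theoretic analysis'' is a placeholder for the entire difficulty. Second, you explicitly defer the immersion property --- the very content of the word \emph{immersed} in the statement --- to ``additional rigidity'' of the framework without proving it. In the Fuchsian setting this amounts to showing that the constructed equation has no apparent singularities in the upper half-plane (equivalently, that the fundamental system building the spinor data has nonvanishing Wronskian-type quantities there), and that the quadratic expressions $\phi_1,\phi_2,\phi_3$ have no common interior zero; you identify where this should come from but do not carry it out. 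A third, smaller omission: the normalization sending $-1,-i,1$ to the last three vertices is not addressed, though it follows routinely from fixing three singular points by a M\"obius transformation. As it stands, the proposal is a correct roadmap of the argument in \cite{Desideri}, not a proof of the theorem.
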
 

Here by \emph{generic position}, we mean that the $N+3$ edge directions of the polygon $P$ should satisfy the two following conditions: any two directions  may not be parallel, and any three directions may not be coplanar.

In this paper, we have hence to show that one can carry over the above existence statement from such polygonal boundary curves to piecewise $C^2$--smooth closed Jordan curves $\Gamma \subset \mathbb{R}^3$ with total curvature smaller than $ 6 \pi$, by appropriately approximating such a curve $\Gamma$ by polygons $P^n$ in generic positions. We should note here that in \cite{Garnier} Garnier tried to use such a polygonal approximation process as well in order to obtain for any rectifiable, piecewise $C^2$--smooth, closed Jordan curve $\Gamma \subset \mathbb{R}^3$, which is merely assumed to have \emph{finite} total curvature, the existence of some minimal surface $X^* \in \mathcal{C}(\Gamma)$ as a limit of (indeed immersed) minimal surfaces $X^n \in \mathcal{C}(P^n)$ with uniformly converging Weierstrass data $(G^n,H^n)$. Unfortunately, Garnier could only prove that the Weierstrass data $(G^n,H^n)$ of the minimal surfaces $X^n \in \mathcal{C}(P^n)$ converge in $C^0_{loc}(B)$,
 i.e. uniformly on  every interior subdomain $B'\subset \subset B$. Consequently he failed to explain in \cite{Garnier}, pp. 116--144, why this limit surface $X^*$ should map $\partial B$ continuously and weakly monotonically onto $\Gamma$ with mapping degree $1$, i.e. why $X^*$ should indeed be bounded by the approximated curve $\Gamma$. This could be one of the reasons for the fact that his paper \cite{Garnier} has never been accepted as the first rigorous solution of Plateau's problem for arbitrary piecewise smooth boundary curves. Moreover the limit surface $X^*$ of some sequence of minimal immersions $X^n \in \mathcal{C}(P^n)$ whose Weierstrass data converge merely in $C^0_{loc}(B)$ might in general have branch points in $B$. In fact, $X^*$ could even be a constant map, thus a completely degenerated surface, if there is no further knowledge available neither about the involved boundary curves $\Gamma$ and $P^n$, nor about the converging minimal immersions $X^n$.

In this paper, we at first reparametrize the minimal immersions $X^n \in \mathcal{C}(P^n)$ appropriately in order to obtain the existence of some subsequence $\{\tilde X^{n_l}\}$ whose boundary values can be shown to converge uniformly to a continuous and weakly monotonic parametrization $\beta$ of the curve $\Gamma$. We then conclude in a second step that the harmonic extension $X^*$ of $\beta$ onto $\bar B$ does not only inherit the conformality but also the absence of branch points from the uniformly converging minimal immersions $\tilde X^{n_l}$, by means of two Theorems due to Sauvigny \cite{Sauv5},  \cite{Sauv1}. 

In contrast to this approximative strategy Osserman, Gulliver and Alt considered some global minimizer $X^*$ of the Dirichlet energy $\mathcal{D}(X)= \frac{1}{2} \int_B \mid DX \mid^2 \dd u\dd v$ in the class $\mathcal{C}(\Gamma)$ and classified the branch points of $X^*$ into two different types, into ``true'' and ``false'' branch points. Alt showed firstly in \cite{Alt1} by a certain surgery technique that any ``true'' branch point $w_0 \in B$ of $X^*$ would lead to an impossible behaviour of the unit normal of $X^*$ in a neighbourhood of $w_0$. After that he used in \cite{Alt2} another surgery technique in order to even rule out the existence of eventually remaining ``false'' branch points of any such global minimizer $X^*$ of $\mathcal{D}$ (in fact of any functional of some class which contains $\mathcal{D}$ in particular). Since both of his exclusion arguments work for global minimizers in $\mathcal{C}(\Gamma)$ for any closed rectifiable Jordan curve in $\mathbb{R}^3$, 
 his result is obviously stronger than the above Theorem \ref{Main theorem}, to be proved in Section \ref{Proof_of_Main_Th}.

\section{Preparations for the proof of Theorem \ref{Main theorem}}
\label{Preparations} 

Firstly we need the following definitions.

\begin{definition}  \label{poly approx}
Let $\Gamma$ be an arbitrary closed rectifiable Jordan curve in $\mathbb{R}^3$ being piecewise of class $C^2$ and $\gamma:\mathbb{S}^1 \stackrel{\cong} \longrightarrow \Gamma $ a fixed parametrization of $\Gamma$.
\begin{itemize}
\item[(i)] We term the elements of a sequence $\{P^n\}$ of simple closed polygons $P^n \subset \mathbb{R}^3$ in generic positions  
\emph{polygonal approximations} of $\Gamma$ if there
exist homeomorphisms $\varphi^n:\Gamma \stackrel{\cong }\longrightarrow P^n$
such that 
\begin{equation}  \label{varphin conv}
\max_{x\in \Gamma } \mid x - \varphi^n(x) \mid \longrightarrow 0
\qquad  \text{as } n\rightarrow \infty.
\end{equation}

\item[(ii)] We define the total curvature $\textnormal{TC}(P)$ of some polygonal approximation $P$ as the sum of the exterior angles $\eta_1, \ldots, \eta_M$ of $P$ at its consecutive vertices $A_1, \ldots, A_M$.
\item[(iii)] Finally, let $L:=\mathcal{L}(\Gamma)$ denote the length of $\Gamma$ and consider the piecewise smooth parametrization $\gamma:[0,L)  \stackrel{\cong }\longrightarrow \Gamma$
of $\Gamma $ with $\gamma\in C^2([0,L) \setminus \{t_1,\ldots, t_m\})$ and $\mid \dot \gamma \mid \equiv 1$ on $[0,L) \setminus \{t_1,\ldots, t_m\}$, for some subdivision $0 \leq t_1< t_2<\ldots <t_{m} < L$ of $[0,L)$. We term  $\vartheta_i \in (0,\pi)$ the smaller angle between the two tangent vectors 
$\dot \gamma (t_i+)$ and $\dot \gamma (t_i-)$ and define the total curvature of $\Gamma$ as
\begin{displaymath}
\textnormal{TC}(\Gamma) := \sum_{i=1}^m \vartheta_i + 
\sum_{i=1}^m \int_{t_i}^{t_{i+1}} \mid \ddot{\gamma} \mid \,\dd s,  
\end{displaymath}
where we set $t_{m+1}:=t_1$.
\end{itemize}  
\end{definition}

Now we can state the following technical approximation tool: 

\begin{proposition}  \label{Approx of Gamma}
Let $\Gamma$ be an arbitrary closed rectifiable Jordan curve in 
$\mathbb{R}^3$ being piecewise of class $C^2$ with three fixed different  points $x_0,x_1,x_2$, and let $\varepsilon>0$ be arbitrarily fixed. Then there exist some sequence $\{P^n\}$ of polygonal approximations of $\Gamma$ with homeomorphisms $\varphi^n:\Gamma \stackrel{\cong }\longrightarrow P^n$ 
and some integer $N(\varepsilon) \in \mathbb{N}$ such that there hold
\begin{align}
\mathcal{L}(P^n) &< \mathcal{L}(\Gamma) + \varepsilon, \qquad \textnormal{and} \label{lenght conv}\\
\textnormal{TC}(P^n) &< \textnormal{TC}(\Gamma) + \varepsilon,  \label{curvature conv}
\end{align}
for every $n>N(\varepsilon)$. In addition, the sequence $\{P^n\}$ can be constructed in such a way that three vertices $a^n_0, a^n_1, a^n_2$ of each polygonal approximation $P^n$ coincide with the fixed points $x_0,x_1,x_2$ of $\Gamma$, i.e. such that
$a^n_0 = x_0, a^n_1 = x_1$ and $a^n_2 = x_2$ for each $n\in \mathbb{N}$,
without violating the property \emph{to be in generic position} of each 
$P^n$. Moreover, the homeomorphisms $\varphi^n$ can then be chosen such that  $\varphi^n(x_k)=x_k$ $(k=0,1,2)$ for every $n\in \mathbb{N}$.
\end{proposition}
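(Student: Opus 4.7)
The plan is to first build $P^n$ as an inscribed polygon in $\Gamma$ whose vertex set contains $\{x_0,x_1,x_2\}$ together with all corners $t_i$ and a sufficiently fine subsampling of each smooth sub-arc, and afterwards to perturb the remaining vertices slightly in order to enforce generic position. Concretely, I parametrize $\Gamma$ by arc-length via $\gamma:[0,L)\to\Gamma$ and denote by $s_0,s_1,s_2\in[0,L)$ the parameter values with $\gamma(s_k)=x_k$. For each $n$ I choose a partition $\mathcal{T}_n=\{0=\tau_0<\tau_1<\ldots<\tau_{M_n}=L\}$ of mesh at most $1/n$ containing the finite ``marked set'' $\{t_1,\ldots,t_m,s_0,s_1,s_2\}$, and form the inscribed polygon $\tilde P^n$ with vertices $\gamma(\tau_j)$. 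The natural homeomorphism $\tilde\varphi^n:\Gamma\to\tilde P^n$ is defined by mapping each arc $\gamma([\tau_j,\tau_{j+1}])$ affinely onto the corresponding chord. Uniform continuity of $\gamma$ then gives $\max_{x\in\Gamma}|x-\tilde\varphi^n(x)|\to 0$, while the arc-length property immediately produces the crude bound $\mathcal{L}(\tilde P^n)\le L$.

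The crucial analytic step is the convergence $\textnormal{TC}(\tilde P^n)\to\textnormal{TC}(\Gamma)$. Since every corner $t_i$ belongs to $\mathcal{T}_n$, the two edges of $\tilde P^n$ meeting at the vertex $\gamma(t_i)$ have consecutive direction vectors converging, as the mesh shrinks, to $\dot\gamma(t_i-)$ and $\dot\gamma(t_i+)$, so the exterior angle at $\gamma(t_i)$ tends to $\vartheta_i$. At an intermediate vertex $\gamma(\tau_j)$ inside a smooth sub-arc $[t_i,t_{i+1}]$, a second-order Taylor expansion of $\gamma$ in arc length shows that the exterior angle equals $|\ddot\gamma(\tau_j)|(\tau_{j+1}-\tau_j)+o(\tau_{j+1}-\tau_j)$ uniformly in $j$, with uniformity coming from the uniform continuity of $\ddot\gamma$ on each compact smooth sub-arc. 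Summing over $j$ within such a sub-arc yields a Riemann sum for $\int_{t_i}^{t_{i+1}}|\ddot\gamma|\,\dd s$, and adding the corner contributions gives $\textnormal{TC}(\tilde P^n)\to\textnormal{TC}(\Gamma)$.

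With this convergence in hand I fix $n$ so large that $\textnormal{TC}(\tilde P^n)<\textnormal{TC}(\Gamma)+\varepsilon/2$ and $\mathcal{L}(\tilde P^n)<\mathcal{L}(\Gamma)+\varepsilon/2$, and then perturb those vertices of $\tilde P^n$ other than $x_0,x_1,x_2$. The generic-position condition (no two edge directions parallel, no three coplanar) fails precisely on a finite union of real algebraic subvarieties of codimension at least one in the configuration space $(\mathbb{R}^3)^{M_n-3}$ of the movable vertices, hence is satisfied on an open dense subset arbitrarily close to the starting configuration. Length, total curvature, and (for a sufficiently fine initial partition) simplicity of the closed polygonal curve all depend continuously on the vertex positions, so a small enough perturbation produces a $P^n$ in generic position fulfilling (\ref{lenght conv}) and (\ref{curvature conv}). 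I finally define $\varphi^n$ by composing $\tilde\varphi^n$ with the piecewise-affine bijection $\tilde P^n\to P^n$ that sends each vertex to its perturbed counterpart; since $x_0,x_1,x_2$ remain fixed, $\varphi^n(x_k)=x_k$ is automatic.

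The hardest part, I expect, will be the uniform control of the exterior-angle expansion in the second paragraph: there are $M_n\to\infty$ summands each contributing an error $o(\tau_{j+1}-\tau_j)$, so without genuine uniformity in $j$ the total error need not vanish. Uniformity does follow from the uniform continuity of $\ddot\gamma$ on each closed smooth sub-arc, but that is the place where care is required. The remaining ingredients — convergence of chord directions at corners, openness and density of the generic-position locus, continuity of length and total curvature in the vertex data, and preservation of Jordan simplicity under sufficiently small perturbations — are standard and should cause no conceptual difficulty.
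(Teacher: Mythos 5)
The paper states Proposition~\ref{Approx of Gamma} without proof, so there is no in-text argument to compare against; your construction (inscribe a polygon containing the corners and the three marked points, then perturb the remaining vertices into generic position) is the natural one and is essentially correct. Two points deserve care. First, a small computational slip: by the Taylor expansion the exterior angle at an intermediate vertex $\gamma(\tau_j)$ is $\tfrac{1}{2}\,|\ddot\gamma(\tau_j)|\,(\tau_{j+1}-\tau_{j-1})+o(\tau_{j+1}-\tau_{j-1})$, i.e.\ it involves the \emph{average} of the two adjacent step lengths rather than $\tau_{j+1}-\tau_j$ alone; the resulting sum is still a Riemann sum for $\int|\ddot\gamma|\,\dd s$, so the conclusion is unaffected, but the formula as you state it is only correct for an equidistant partition. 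Second, and more substantively: the paper's definition of ``piecewise $C^2$'' literally guarantees only one-sided \emph{first} derivatives at the corner parameters $t_i$, so the uniform continuity (indeed boundedness) of $\ddot\gamma$ on the \emph{closed} sub-arcs $[t_i,t_{i+1}]$, on which your uniform error control rests, is not automatic; if $\ddot\gamma$ is merely integrable near a corner, Riemann sums of $|\ddot\gamma|$ need not converge to the integral. Under the standard reading of ``piecewise $C^2$'' (each $\gamma|_{[t_i,t_{i+1}]}$ of class $C^2$ up to the endpoints) your argument is complete; otherwise you should either treat small corner neighbourhoods separately using $\int|\ddot\gamma|\,\dd s<\infty$, or simply invoke the Fenchel--Milnor inequality $\textnormal{TC}(\tilde P^n)\le\textnormal{TC}(\Gamma)$ valid for \emph{every} inscribed polygon, which yields~(\ref{curvature conv}) with no asymptotic analysis at all (just as the inscribed-polygon inequality $\mathcal{L}(\tilde P^n)\le\mathcal{L}(\Gamma)$ yields~(\ref{lenght conv})), leaving only the perturbation step to control. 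Your treatment of that step --- openness and density of the generic-position locus in the configuration space of the movable vertices, continuity of length and total curvature, and openness of simplicity --- is correct, though the simplicity of the unperturbed inscribed polygon for fine partitions itself uses the uniform positive lower bound on $|\gamma(s)-\gamma(t)|$ for parameter-separated $s,t$ together with a local graph representation near each point, and is worth a sentence.
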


Moreover, in the next section we will use the following two notions of \emph{branch points} of some minimal surface with arbitrary continuous boundary values:
 
\begin{definition}  \label{branch points}
Let $X \in C^0(\bar B,\mathbb{R}^3)\cap C^2(B,\mathbb{R}^3)$ be 
an arbitrary minimal surface.
\begin{itemize}
\item[(i)] We call a point $w_0 \in B$ an \emph{interior branch point} 
of $X$ if $|X_u(w_0)|=0$.
\item[(ii)] We call a point $w_0 \in \partial B$ a \emph{boundary branch point} 
of $X$ if there holds
\begin{equation}  \label{boundary branch point}
|X_u(w_k)| \longrightarrow 0  \quad \text{as } k \rightarrow \infty
\end{equation}
for any sequence $\{w_k\} \subset B$ converging to $w_0$.
\end{itemize}
\end{definition} 

For any minimal surface $X \in \mathcal{C}(P)$ bounded by some closed polygon $P$, Heinz proved in \cite{Heinz} in particular the following asymptotic expansion for the complex derivative $X_w := \frac{1}{2} \big{(} \frac{\partial}{\partial u} - i \frac{\partial}{\partial v} \big{)} X$ about the boundary points $e^{i t_1}, \ldots, e^{i t_{M}}$ that are mapped by $X$ onto the vertices $A_1, \ldots, A_M$ of $P$: 
\begin{equation}  \label{expan of Xw}
X_w(w) = c_k  \,(w - e^{i t_k})^{\rho_k + m_k} 
+ O(\mid w - e^{i t_k} \mid ^{\rho_k + m_k + \epsilon_k }) ,
\end{equation}
for each $k \in \{1,\ldots, M\}$. Here, the $c_k$ are fixed vectors in $\mathbb{C}^3\setminus \{0\}$, the $m_k$ are non-negative integers -- the so-called \emph{branch point orders} of $X$ with respect to the points $e^{i t_k}$ (see Definition \ref{Branchorder} (i) below) --, $\rho_k \in (-1,0)$ and 
$\epsilon_k \in (0,1)$. The angle of the minimal surface at the vertex $A_k$ is $(\rho_k + m_k+1)\pi$. If the integer $m_k$ is even (resp. odd), then $|\rho_k|\pi$ is the exterior angle $\eta_k$ 
(resp. the interior angle) of the polygon $P$ at the vertex $A_k$.
Now one can easily see that such a point $e^{i t_k}$ is a 
\emph{boundary branch point} of $X$ in the sense of Definition \ref{branch points} if and only if its order $m_k$ is a \emph{positive} integer. \\\\
Moreover, about any point $w_0\in \bar B\setminus \{e^{it_k}\}_{k=1}^{M}$ there is a Taylor expansion of $X_w$, i.e. 
\begin{equation}  \label{Taylor}
X_w(w)= a_{m(w_0)}\, (w-w_0)^{m(w_0)} + a_{m(w_0)+1} \, (w-w_0)^{m(w_0)+1}+ \ldots,
\end{equation}
where $a_{m(w_0)}\in \mathbb{C}^3 \setminus \{0\}$ and $m(w_0) \in \mathbb{N}_0$. 
Thus again, any point $w_0 \in\bar B\setminus \{e^{it_k}\}_{k=1}^{M}$ is a branch point of the  
minimal surface $X$ -- in the sense of Definition \ref{branch points} --
if and only if the integer $m(w_0)$ in (\ref{Taylor}) is positive. This motivates the following  

\begin{definition}  \label{Branchorder}
(i) We term the exponent $m_k$ in (\ref{expan of Xw}) resp. $m(w_0)$ in
(\ref{Taylor}) the branch point order of the surface
$X$ at the point $e^{i t_k}$, $k=1,\ldots, M$, resp. at the point
$w_0\in \bar B\setminus \{e^{i t_k}\}_{k=1}^{M}$.\\
(ii) We define the total branch point order of $X$ by 
\begin{equation} \label{total_branch_point_order}
\kappa (X):=\sum_{w\in B} m(w) + 
\frac{1}{2} \sum_{w\in \partial B \setminus \{e^{i t_k}\}_{k=1}^{M}}  m(w)
+\frac{1}{2} \sum_{k=1}^M m_k, 
\end{equation}  
which is a finite sum since $X$ can only have isolated and thus 
finitely many branch points on $\bar B$ on account of the expansions 
(\ref{expan of Xw}) and (\ref{Taylor}). 
\end{definition} 
The geometric meaning of the exponents $\rho_k$, k=$1, \ldots, M$, 
appearing in (\ref{expan of Xw}), becomes even clearer by means of the 
following Gauss-Bonnet formula for minimal surfaces with polygonal 
boundaries, the main result of Sauvigny's article \cite{Sauv5}: 

\begin{proposition} \label{Sauvignys_Gauss_Bonnet}
Let $X \in \mathcal{C}(P)$ be a minimal surface bounded by some closed 
polygon $P$, with vertices $A_1, \ldots, A_M$, having Gauss-curvature $K_{X}$. 
Then there holds the formula 
\begin{equation} \label{Sauvignys_Gauss_Bonnet_formula}
\int_B \mid K_{X} \mid E \, \dd u\dd v + 2 \pi (1+\kappa(X)) 
= \pi \, \sum_{k=1}^M \mid \rho_k \mid,
\end{equation}
where $E$ denotes $|X_u|^2 \equiv |X_v|^2$.
\end{proposition}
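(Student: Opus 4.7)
The plan is to derive (\ref{Sauvignys_Gauss_Bonnet_formula}) by applying the classical Gauss--Bonnet theorem to the branched conformal immersion $X$ together with the induced (possibly degenerate) metric $\dd s^2 = E\,(\dd u^2 + \dd v^2)$ on $\bar B$. The crucial geometric observation is that every edge of $P$ is a straight segment of $\mathbb{R}^3$ lying in the image of $X$, and therefore a geodesic of $\dd s^2$; consequently the geodesic curvature along $\partial B$ vanishes away from the finite set $\{e^{i t_k}\}_{k=1}^{M}$ and the boundary branch points on $\partial B \setminus \{e^{i t_k}\}_{k=1}^{M}$.

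The first step is to excise small open discs around each interior branch point $w_0 \in B$, small half-discs around each boundary branch point on $\partial B \setminus \{e^{i t_k}\}_{k=1}^{M}$, and small circular sectors around each vertex preimage $e^{i t_k}$. The classical Gauss--Bonnet formula then applies to the resulting perforated domain, whose Euler characteristic is still $1$. The expansions (\ref{expan of Xw}) and (\ref{Taylor}) show that $\dd s^2$ has a conic-type singularity at each excised point, whose total cone angle is determined by the leading exponent: $2\pi(m(w)+1)$ at an interior branch point of order $m(w)$, $\pi(m(w)+1)$ at a smooth boundary branch point, and $\pi(\rho_k + m_k + 1)$ at a vertex, the last of these being already recorded in the discussion following (\ref{expan of Xw}). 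Shrinking the excised neighbourhoods, the boundary of each of them contributes in the limit the respective angle deficits $-2\pi\,m(w)$, $-\pi\,m(w)$, and $\pi|\rho_k|-\pi\,m_k$ (using $\rho_k \in (-1,0)$, so that $-\rho_k = |\rho_k|$).

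Assembling these contributions with $\chi(\bar B) = 1$ would yield
\begin{equation*}
\int_B K_X\, E \, \dd u\dd v + \pi \sum_{k=1}^M |\rho_k| - \pi \sum_{k=1}^M m_k - \pi \!\!\!\sum_{w\in \partial B \setminus \{e^{i t_k}\}_{k=1}^{M}}\!\!\! m(w) - 2\pi \sum_{w\in B} m(w) = 2\pi,
\end{equation*}
and recognising the last three sums as $2\pi\,\kappa(X)$ via Definition~\ref{Branchorder}(ii), together with $|K_X| = -K_X \geq 0$ (valid because minimal surfaces have non-positive Gauss curvature), gives precisely (\ref{Sauvignys_Gauss_Bonnet_formula}).

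The main obstacle will be to justify the two limit passages rigorously: one must verify that $\int_B |K_X|\,E\,\dd u\dd v$ is finite, and that the geodesic-curvature line integrals along the shrinking excision arcs converge to the cone-angle contributions above. Both points follow from a careful asymptotic analysis based on (\ref{expan of Xw}) and (\ref{Taylor}): $E$ behaves like $|w-w_0|^{2\,m(w_0)}$ near a smooth (interior or boundary) branch point and like $|w - e^{i t_k}|^{2(\rho_k + m_k)}$ near a vertex, ensuring integrability of $|K_X|\,E$, while the leading-order form of the Gauss map (respectively of the boundary tangent) picks out exactly the stated cone angles as the limits of the boundary contributions on the shrinking arcs.
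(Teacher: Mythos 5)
The paper offers no proof of Proposition~\ref{Sauvignys_Gauss_Bonnet}: it is quoted as the main result of Sauvigny's article \cite{Sauv5}, so your proposal cannot be matched against an argument in the text and must stand on its own. Your strategy --- classical Gauss--Bonnet on $\bar B$ for the degenerate conformal metric $E\,|\dd w|^2$, excision of branch points and vertex preimages, straight edges as geodesics, and limiting angle contributions $-2\pi m(w)$, $-\pi m(w)$ and $\pi|\rho_k|-\pi m_k$ --- is the standard and correct route, and your bookkeeping does reproduce \eqref{Sauvignys_Gauss_Bonnet_formula}: moving the three branch-order sums to the right-hand side yields $2\pi(1+\kappa(X))$ via \eqref{total_branch_point_order}, and $K_X\leq 0$ converts $\int_B K_X E\,\dd u\dd v$ into $-\int_B |K_X|E\,\dd u\dd v$.

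Nevertheless, the analytic core that you defer to ``a careful asymptotic analysis'' contains genuine gaps as sketched. First, integrability of $|K_X|E$ does \emph{not} follow from the stated behaviour of $E$ alone: near a branch point $|K_X|$ blows up at essentially the rate at which $E$ degenerates, so the decay of $E$ by itself proves nothing about the product. One must control the curvature directly, e.g.\ by writing $|K_X|E\,\dd u\dd v$ as the pull-back of the spherical area form under the Gauss map and establishing a H\"older-type expansion of the Gauss map (not merely of $X_w$) at the points $e^{it_k}$; this is an input beyond \eqref{expan of Xw}. Second, the geodesic curvature of the shrinking excision arcs involves $\partial_n\log\sqrt{E}$, hence first derivatives of $X_w$, whereas \eqref{expan of Xw} as quoted controls only $X_w$ itself; you need the differentiated form of Heinz's expansion (which is available in \cite{Heinz}, but must be invoked) to see that the remainder terms do not contribute in the limit. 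Third, to assert that the open edges are geodesics and to apply Gauss--Bonnet up to $\partial B$ you need $X$ to be of class $C^2$ up to the open boundary arcs; this follows from Schwarz reflection across straight segments but is nowhere justified in your text. With these three points supplied the argument closes; as written it is an outline rather than a proof.
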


For later purpose we should state the following 
\begin{corollary} \label{Special_Gauss_Bonnet}
Let $X \in \mathcal{C}(P)$ be a minimal surface bounded by some closed 
polygon $P$ which has only branch points in the points $e^{i t_k}$, 
$k=1, \ldots, M$. Then there holds:
\begin{equation} \label{Special_Gauss_Bonnet_formula}
 \int_B \mid K_{X} \mid E \, \dd u\dd v + 2\pi 
= \pi \sum_{k=1}^M  (\mid \rho_k \mid - m_k).
\end{equation}
Moreover, letting $\eta_k$ denote the $M$ exterior angles of $P$, 
$s$ the number of branch points of $X$ and $k_1, k_2, \ldots, k_{M-s}$ 
those indices in $\{1, \ldots, M\}$ for which $e^{i t_{k_j}}$ is not 
a branch point of $X$, then we obtain the estimate
\begin{equation} \label{Special_Gauss_Bonnet_estimate}
\int_B \mid K_{X} \mid E \, \dd u\dd v
\leq \sum_{j=1}^{M-s} \eta_{k_j}  - 2\pi 
\end{equation}
and in particular 
\begin{equation} \label{Special_Gauss_Bonnet_estimate2}
 \int_B \mid K_{X} \mid E \, \dd u\dd v  
\leq \textnormal{TC}(P) - 2\pi,
\end{equation}
where $\textnormal{TC}(P):= \sum_{k=1}^M \eta_k$ is
the total curvature of $P$.
\end{corollary}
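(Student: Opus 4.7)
The plan is to deduce both the identity and the two estimates directly from Sauvigny's Gauss--Bonnet formula (\ref{Sauvignys_Gauss_Bonnet_formula}), exploiting the geometric interpretation of the exponents $\rho_k$ and $m_k$ recalled just before Definition \ref{Branchorder}.

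First I would derive the identity (\ref{Special_Gauss_Bonnet_formula}). Since by hypothesis $X$ has no branch points on $\bar B \setminus \{e^{it_k}\}_{k=1}^M$, the first two sums in (\ref{total_branch_point_order}) vanish, so $\kappa(X) = \frac{1}{2}\sum_{k=1}^M m_k$. Substituting this into (\ref{Sauvignys_Gauss_Bonnet_formula}) and rearranging immediately yields (\ref{Special_Gauss_Bonnet_formula}).

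The heart of the argument is the estimate (\ref{Special_Gauss_Bonnet_estimate}). I would split the sum $\sum_{k=1}^M (|\rho_k| - m_k)$ on the right-hand side of (\ref{Special_Gauss_Bonnet_formula}) according to whether $e^{it_k}$ is a branch point of $X$ (i.e.\ $m_k \geq 1$) or not (i.e.\ $m_k = 0$). For the $M-s$ indices $k_j$ with $m_{k_j} = 0$, the quoted geometric fact gives $\pi|\rho_{k_j}| = \eta_{k_j}$, so these terms contribute exactly $\sum_{j=1}^{M-s}\eta_{k_j}$. It remains to show that each of the $s$ branch-point terms $\pi(|\rho_k|-m_k)$ is non-positive. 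This splits into two cases depending on the parity of $m_k \geq 1$: if $m_k$ is even then $\pi|\rho_k| = \eta_k \in (0,\pi)$, so
\[
\pi(|\rho_k|-m_k) = \eta_k - m_k\pi \leq \eta_k - 2\pi < 0;
\]
if $m_k$ is odd then $\pi|\rho_k| = \pi - \eta_k$ is the interior angle, hence
\[
\pi(|\rho_k|-m_k) = \pi - \eta_k - m_k\pi \leq -\eta_k \leq 0.
\]
In both cases the contribution is $\leq 0$, so dropping these terms only enlarges the right-hand side, which together with (\ref{Special_Gauss_Bonnet_formula}) delivers (\ref{Special_Gauss_Bonnet_estimate}).

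Finally, the coarser estimate (\ref{Special_Gauss_Bonnet_estimate2}) is immediate: since every exterior angle $\eta_k$ of the polygon $P$ lies in $(0,\pi)$, one has $\sum_{j=1}^{M-s}\eta_{k_j} \leq \sum_{k=1}^M \eta_k = \textnormal{TC}(P)$, and (\ref{Special_Gauss_Bonnet_estimate2}) follows. The only mildly delicate step is the sign discussion for branch-point terms; everything else is bookkeeping on top of Proposition \ref{Sauvignys_Gauss_Bonnet}.
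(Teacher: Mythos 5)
Your proposal is correct and follows essentially the same route as the paper: derive (\ref{Special_Gauss_Bonnet_formula}) from Proposition \ref{Sauvignys_Gauss_Bonnet} via $\kappa(X)=\frac{1}{2}\sum_k m_k$, identify $\pi|\rho_{k_j}|=\eta_{k_j}$ at the non-branch vertices, and show the branch-point terms are non-positive. The only difference is that the paper dispenses with your parity case distinction by the one-line observation that $m_k\geq 1$ and $\rho_k\in(-1,0)$ give $|\rho_k|-m_k\leq|\rho_k|-1<0$ directly.
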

\begin{proof}
Formula (\ref{Special_Gauss_Bonnet_formula}) follows immediately from 
formula (\ref{Sauvignys_Gauss_Bonnet_formula}) and the definition 
of the total branch point order $\kappa(X)$ in (\ref{total_branch_point_order}). 
Now, as explained above, $\pi |\rho_k|$ measures exactly the exterior angle 
$\eta_k$ of the polygon $P$ at its vertex $A_k$ if $m_k$ is even, 
in particular if $m_k=0$, i.e. if the point $e^{it_k}$ is not a
branch point of $X$. This yields 
$\pi (|\rho_{k_j}| - m_{k_j}) = \eta_{k_j}$ for $j=1, \ldots, M-s$ 
on the one hand. On the other hand, 
if some point $e^{it_k}$ is a branch point of $X$, then we have $m_k \geq 1$
and thus $\mid \rho_k \mid - m_k \leq \mid \rho_k \mid - 1 < 0$
on account of $\rho_k \in (-1,0)$. In combination with formula 
(\ref{Special_Gauss_Bonnet_formula}) this yields the claimed 
estimate (\ref{Special_Gauss_Bonnet_estimate}) which instantly
implies estimate (\ref{Special_Gauss_Bonnet_estimate2}) as well.
\end{proof}

Finally we need the following compactness result for boundary values, which 
we shall prove for the sake of completeness (see also \cite{Nit}, 
Paragraphs 21, 234 and 235):
 
\begin{proposition}   \label{Compactness}
Let $\Gamma $ and $\{P^n\}$ be as in Proposition \ref{Approx of Gamma} and 
$X^n \in \mathcal{C}(P^n)$ some sequence of surfaces with uniformly bounded Dirichlet energies, i.e. $\mathcal{D}(X^n)\leq M$ for every $n\in \mathbb{N}$, and satisfying a \emph{uniform three-point-condition} 
$X^n(e^{i\frac{\pi }{2}(2+k)}) = x_k$, for $k=0,1,2$, where $x_0,x_1,x_2$ are three fixed consecutive points on $\Gamma$. Then there exists some subsequence $\{X^{n_l}\}$ whose boundary values satisfy
\begin{displaymath}
X^{n_l}\mid _{\partial B}\longrightarrow \beta \qquad \text{in }
C^0(\partial B, \mathbb{R}^3),
\end{displaymath}
where $\beta:\mathbb{S}^1\longrightarrow \Gamma$ is a continuous, weakly monotonic map onto $\Gamma$ with mapping degree $1$ and 
$\beta (e^{i\frac{\pi }{2}(2+k)})=x_k$, for $k=0,1,2$.    
\end{proposition}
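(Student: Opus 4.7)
\medskip

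The plan is to apply a Courant--Lebesgue type argument to obtain equicontinuity of the boundary restrictions $X^n|_{\partial B}$, combine it with the uniform three-point-condition to rule out degeneration, and then pass to the limit using Arzel\`a--Ascoli. The final step is to verify that the inherited limit $\beta$ indeed parametrises $\Gamma$ in the prescribed weakly monotonic manner with mapping degree $1$.

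First I would collect the following two preliminary estimates. From the uniform Hausdorff convergence $\max_{x \in \Gamma} |x-\varphi^n(x)|\to 0$ (cf.\ (\ref{varphin conv})) one deduces that the polygons $P^n$ lie in a fixed compact neighbourhood of $\Gamma$, so the sequence $\{X^n|_{\partial B}\}$ is uniformly bounded in $C^0(\partial B,\mathbb{R}^3)$. For equicontinuity I would invoke the classical Courant--Lebesgue Lemma: for any $w_0\in\partial B$ and any small $\delta>0$ the bound $\mathcal{D}(X^n)\le M$ yields some $r_n(w_0,\delta)\in(\delta,\sqrt{\delta})$ such that the image under $X^n$ of the arc $\partial B_{r_n}(w_0)\cap\bar B$ has Euclidean diameter $\le C\sqrt{M/\log(1/\delta)}$, independently of $n$. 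Since $X^n|_{\partial B}$ maps $\partial B$ weakly monotonically onto $P^n$, this controls the oscillation of $X^n$ on the short subarc of $\partial B$ cut out by $\partial B_{r_n}(w_0)$; the longer complementary subarc would a priori be the trouble, but the uniform three-point-condition $X^n(e^{i\frac{\pi}{2}(2+k)})=x_k$ ($k=0,1,2$) forces at least two of the fixed points $x_0,x_1,x_2$ to lie on the longer arc once $\delta$ is chosen sufficiently small uniformly in $w_0$, preventing the image of the longer arc from also shrinking to small diameter. This standard dichotomy produces a modulus of continuity depending only on $M$ and on the minimal pairwise Euclidean distance of $x_0,x_1,x_2$, hence uniform in $n$.

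Next I would apply Arzel\`a--Ascoli to extract a subsequence $\{X^{n_l}|_{\partial B}\}$ converging uniformly to some $\beta\in C^0(\partial B,\mathbb{R}^3)$. Since $X^{n_l}(\partial B)=P^{n_l}$ and $P^{n_l}$ converges in Hausdorff distance to $\Gamma$ by (\ref{varphin conv}), one has $\mathrm{dist}(\beta(e^{it}),\Gamma)=0$ for every $t$, so $\beta(\partial B)\subset\Gamma$. The three-point-condition is trivially inherited: $\beta(e^{i\frac{\pi}{2}(2+k)})=x_k$ for $k=0,1,2$.

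The remaining and most delicate point is to verify the weak monotonicity and the mapping degree $1$ of $\beta$ onto $\Gamma$. Here I would argue as follows. Using the homeomorphisms $\varphi^{n_l}:\Gamma\to P^{n_l}$ from Proposition \ref{Approx of Gamma} together with (\ref{varphin conv}), one can replace $X^{n_l}|_{\partial B}$ by $(\varphi^{n_l})^{-1}\circ X^{n_l}|_{\partial B}$, which is a weakly monotonic map from $\partial B$ onto $\Gamma$ of degree $1$, and which still converges uniformly to $\beta$ (the error $(\varphi^{n_l})^{-1}\circ X^{n_l}|_{\partial B}-X^{n_l}|_{\partial B}$ tends to $0$ uniformly). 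Weak monotonicity and surjectivity onto $\Gamma$ are then preserved under uniform convergence of such parametrisations, provided the limit is not constant on any full semicircle; but the three-point-condition, in which $x_0,x_1,x_2$ are three \emph{different} points of $\Gamma$, rules this out. Hence $\beta:\mathbb{S}^1\to\Gamma$ is continuous, weakly monotonic and of mapping degree $1$, as claimed.

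I expect the main obstacle to be the combined use of the three-point-condition with the Courant--Lebesgue oscillation bound: one has to argue carefully that the ``long arc'' in the Courant--Lebesgue dichotomy is pinned down by at least two of the fixed points $x_0,x_1,x_2$, and that the modulus of continuity depends only on $M$ and on $\mathrm{dist}(x_i,x_j)$, not on the individual surface $X^n$ or the approximating polygon $P^n$.
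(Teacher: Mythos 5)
Your route (Courant--Lebesgue equicontinuity plus Arzel\`a--Ascoli) is genuinely different in structure from the paper's, which first applies Helly's selection principle to the monotone reparametrizations $\sigma^n$ defined by $(\varphi^n)^{-1}\circ X^n(e^{it})=\gamma(e^{i\sigma^n(t)})$, obtains a pointwise limit $\sigma$, and only then uses the Dirichlet bound (via essentially the same $\int_\delta^{\sqrt{\delta}}|X^{n_l}(w_1(\rho))-X^{n_l}(w_2(\rho))|^2\,\rho^{-1}\dd\rho\leq 2\pi M$ estimate that underlies Courant--Lebesgue) to rule out a jump of $\sigma$, upgrading to uniform convergence by the P\'olya--Szeg\H{o} refinement of Helly's theorem. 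Your approach front-loads the compactness into an equicontinuity statement; the paper front-loads it into monotonicity. Both are legitimate, and yours is the more classical textbook argument for Plateau's problem.

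There is, however, one concrete gap in your sketch. The Courant--Lebesgue lemma controls the diameter of the image of the circular cross-cut $\partial B_{r_n}(w_0)\cap B$, i.e.\ it tells you that the two boundary points $w_1,w_2$ cut out by that circle have images $p=X^n(w_1)$, $q=X^n(w_2)$ with $|p-q|$ small. It does \emph{not} by itself "control the oscillation of $X^n$ on the short subarc of $\partial B$": the image of that subarc is one of the two subarcs of $P^n$ determined by $p$ and $q$, and to conclude that this subarc has small diameter you need the family $\{P^n\}$ to be \emph{uniformly locally connected} -- for every $\varepsilon>0$ a $\lambda(\varepsilon)>0$, independent of $n$, such that any $p,q\in P^n$ with $|p-q|<\lambda$ bound a subarc of $P^n$ of diameter $<\varepsilon$. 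This is where the hypothesis \eqref{varphin conv} must actually be used (transporting the local connectedness modulus of the fixed Jordan curve $\Gamma$ to the $P^n$ via the homeomorphisms $\varphi^n$); consequently your modulus of continuity depends not only on $M$ and $\min_{i\neq j}|x_i-x_j|$ but also on this modulus of $\Gamma$. Once that is supplied, the three-point dichotomy you describe (the long arc of $\partial B$ contains at least two of the distinguished points, so its image cannot be the small-diameter subarc) is correct, and the final step -- that a uniform limit of weakly monotonic degree-$1$ parametrizations which is non-constant on the arcs between the three fixed points is again weakly monotonic of degree $1$ -- is also sound, though to make it rigorous you will in effect reintroduce Helly's principle for the $\sigma^{n_l}$, at which point the two proofs largely converge.
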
 

\begin{proof}
We consider a fixed homeomorphic parametrization 
$\gamma :\mathbb{S}^1 \stackrel{\cong }
\longrightarrow \Gamma $ of $\Gamma $ and the weakly monotonic maps\,
$(\varphi^n)^{-1}\circ X^n\mid _{\partial B}:\partial B\longrightarrow \Gamma $
onto $\Gamma $. For each $n\in \mathbb{N}$ there exist non-decreasing maps 
$\sigma^n :[0,2\pi ] \longrightarrow [0,4\pi )$, with 
$\sigma^n(2\pi )=\sigma^n(0)+2\pi $, such that 
$(\varphi^n)^{-1}\circ X^n(e^{it})=\gamma (e^{i\sigma^n(t)})$ for all $t\in [0,2\pi ]$. By (\ref{varphin conv}) we conclude that
\begin{align}  \label{concluedeo}
\max_{t\in [0,2\pi ]} \left| 
\gamma \left(e^{i\sigma^n(t)}\right)-X^n\left(e^{it}\right) \right| 
&=\max_{t\in [0,2\pi ]} \left| \gamma \left(e^{i\sigma^n(t)}\right)- 
\varphi^n\left(\gamma \left(e^{i\sigma^n(t)}\right)\right) \right| \nonumber\\  
&=\max_{x\in \Gamma } \left| x - \varphi^n(x) \right| \longrightarrow 0 \qquad 
\text{as }n\rightarrow \infty.
\end{align}
Furthermore, Helly's selection principle (see \cite{Natan}, p. 248) yields 
some subsequence $\{\sigma^{n_l}\}$ and a non-decreasing function $\sigma$ on 
$[0,2\pi ]$ such that 
\begin{equation}  \label{conv of sigma n}
\sigma^{n_l}(t)\longrightarrow  \sigma (t) \qquad \forall \,t \in [0,2\pi ],
\qquad \text{as } l\rightarrow \infty,   
\end{equation}
thus also\, $\gamma (e^{i\sigma^{n_l}(t)}) \longrightarrow 
\gamma (e^{i\sigma (t)})$ for all $t\in [0,2\pi ]$. Hence, together 
with (\ref{concluedeo}) we arrive at
\begin{equation}  \label{pointw conv of Xnl}
X^{n_l}\left(e^{it}\right) \longrightarrow  \gamma \left(e^{i\sigma (t)}\right) \qquad \forall \,t\in [0,2\pi ], \qquad \text{as } l\rightarrow \infty,
\end{equation}
which especially implies 
$\gamma (e^{i \sigma( \frac{\pi }{2}(2+k))}) = x_k$, for $k=0,1,2$, due to the required uniform three-point-condition imposed on  
$X^n\mid_{\partial B}$, for each $n$.  Hence, since $x_i\not = x_j$\, for $i\not=j$ we see that 
\begin{equation}  \label{three diff}
\sigma \left(\frac{\pi }{2}(2+i)\right) \not 
= \sigma \left( \frac{\pi }{2}(2+j) \right)  \quad \mod \,2\pi, \qquad \text{for }  i\not=j.
\end{equation}
Now an extension of Helly's selection principle (see \cite{Polya}, 
p. 63 and p. 226) provides the uniform convergence of the $\sigma^{n_l}$
if $\sigma$ is known to be continuous, what we are going to prove now.
To this end we shall assume that $\sigma $ was not continuous.   
Since $\sigma $ is weakly monotonic, there exist the one-sided limits\,
$\sigma (t+0)$ and $\sigma (t-0)$, for all $t\in [0,2\pi ]$, where
we mean\, $\sigma (0-0):=\sigma (2\pi-0)-2\pi$ and\, 
$\sigma (2\pi +0):=\sigma (0+0)+2\pi$. The points of discontinuity of 
$\sigma $ coincide with those points $t^*$ in which we have\,     
$0<\sigma (t^*+0)-\sigma (t^*-0)$. Moreover there holds\, 
$\sigma (t^*+0)-\sigma (t^*-0)<2\pi$, otherwise on account of the 
monotonicity of $\sigma$ and\, $\sigma (2\pi )=\sigma (0)+2\pi$ we would 
have \,$\sigma (t)\equiv \sigma (t^*-0)$ \, on $[0,t^*)$ and\, 
$\sigma (t)\equiv \sigma (t^*+0)$ \, on $(t^*,2\pi ]$, which contradicts       
(\ref{three diff}). Hence, we conclude that\, 
$\sigma (t^*+0)\not=\sigma (t^*-0)$ \, mod $2\pi$ and therefore
by the injectivity of $\gamma$ 
\begin{equation}  \label{jump}
\gamma \left(e^{i\sigma(t^*+0)}\right) \not = \gamma \left(e^{i\sigma(t^*-0)}\right)
\end{equation}   
in every discontinuity point $t^*$ of $\sigma $. Now we fix such a point
$t^*$ which we may suppose to be contained in $(0,2\pi )$ without loss of generality. By (\ref{jump}) we have\, 
$\mid \gamma (e^{i\sigma(t^*+0)}) - \gamma (e^{i\sigma(t^*-0)}) \mid =\epsilon>0$ for some $\epsilon>0$. Moreover by the existence of the one-sided limits $\sigma (t^*+0)$,\, $\sigma (t^*-0)$ and by the continuity of $\gamma $
there is some sufficiently small $\alpha>0$ such that 
$[t^*-\alpha, t^*+\alpha] \subset (0,2\pi )$ and 
\begin{align*}
&\left| \gamma \left(e^{i\sigma(t)}\right) - \gamma \left(e^{i\sigma(t^*-0)}\right) \right| < 
\frac{\epsilon }{3} \qquad \forall \,t\in (t^*-\alpha, t^*) \\ 
\textnormal{and} \qquad 
&\left| \gamma \left(e^{i\sigma(t)}\right) - \gamma \left(e^{i\sigma(t^*+0)}\right) \right|
<\frac{\epsilon }{3} \qquad \forall \,t\in (t^*, t^*+\alpha),  
\end{align*}          
which implies together with (\ref{pointw conv of Xnl}):
\begin{equation}  \label{large jump}
\lim_{l\rightarrow \infty } \left| X^{n_l}\left(e^{it'}\right) - X^{n_l}\left(e^{it''}\right) \right|   
= \left| \gamma \left(e^{i\sigma(t')}\right) - \gamma \left(e^{i\sigma(t'')}\right) \right|
>\frac{\epsilon }{3}
\end{equation}
for all $t'\in (t^*-\alpha , t^*)$ and all $t''\in (t^*, t^*+\alpha )$.
Now we only consider pairs $t'$, $t''$ such that $0<t''-t^*=t^*-t'<\alpha $. 
For $r:=2 \sin\big{(} \frac{t^*-t'}{2} \big{)}$ we have\, 
$\partial B_{r}(e^{it^*}) \cap \partial B = \{e^{it'}, e^{it''}\}$. 
We introduce the notation 
$\{w_1(\rho ), w_2(\rho )\} := \partial B_{\rho }(e^{it^*}) \cap \partial B$, for $0<\rho < 2 \sin \big{(}\frac{\alpha }{2} \big{)}$.  
Now making use of the requirement $\mathcal{D}(X^n)\leq M$ for all
$n\in \mathbb{N}$, and of H\"older's inequality one easily infers 
from Fatou's lemma that \,$\liminf_{l\rightarrow \infty } 
\mid X^{n_l}(w_1(\rho )) - X^{n_l}(w_2(\rho )) \mid^2 \frac{1}{\rho }
\in L^1([\delta ,\sqrt{\delta}])$ for any 
$\delta < 4 \sin^2 \big{(} \frac{\alpha }{2} \big{)}$ and that there holds
(see \cite{Nit}, pp. 207--209):
\[
\frac{1}{2\pi } \int_{\delta}^{\sqrt{\delta }} 
\liminf_{l\rightarrow \infty } 
\mid X^{n_l}(w_1(\rho )) - X^{n_l}(w_2(\rho )) \mid^2 \frac{ \dd\rho}{\rho } 
\leq M.        
\]
Combining this with (\ref{large jump}) we achieve:
\[
M> \frac{\epsilon ^2}{18 \pi } \int_{\delta}^{\sqrt{\delta }} 
\frac{ \dd\rho}{\rho } 
= \frac{\epsilon ^2}{36 \pi } \log \Big{(} \frac{1 }{\delta } \Big{)} \qquad 
\forall \,\delta < 4 \sin^2 \Big{(} \frac{\alpha }{2} \Big{)},
\]
which yields a contradiction for a sufficiently small choice of $\delta$.   
Hence, $\sigma$ must be continuous on $[0,2\pi ]$ and therefore 
the convergence in (\ref{conv of sigma n}) even uniform:
\[
\sigma^{n_l}\longrightarrow  \sigma \qquad \text{in } C^0([0,2\pi ]). 
\]   
As $\gamma$ is uniformly continuous on $\mathbb{S}^1$ this yields
\[
\gamma \left(e^{i\sigma^{n_l}(\,\cdot \,)}\right) \longrightarrow  
\gamma \left(e^{i\sigma(\,\cdot \,)}\right)  \qquad \text{in }
C^0([0,2\pi ],\mathbb{R}^3), 
\]
and together with (\ref{concluedeo}) we finally arrive at
\begin{equation}  \label{unif conve}
X^{n_l}\left(e^{i\,(\,\cdot \,)}\right) \longrightarrow  
\gamma \left(e^{i\sigma (\,\cdot \,)}\right)
\qquad \text{in } C^0([0,2\pi ],\mathbb{R}^3).
\end{equation}
Hence, defining $\beta:\mathbb{S}^1\longrightarrow \Gamma $ via \,
$\beta(e^{i\,(\,\cdot \,)}):=\gamma (e^{i\sigma (\,\cdot \,)})$ we see that 
$\beta $ has in fact the asserted properties on account of the continuity 
and weak monotonicity of $\sigma$ and of its property 
$\sigma (2\pi) =\sigma (0) +2\pi$, in combination with the 
homeomorphy of $\gamma$.
Finally $\beta (e^{i\frac{\pi }{2}(2+k)})=x_k$, for $k=0,1,2$, follows immediately from (\ref{unif conve}).
\end{proof}

\section{Proof of Theorem \ref{Main theorem}}  
\label{Proof_of_Main_Th}

Now we fix some closed rectifiable, piecewise $C^2$--Jordan curve $\Gamma$  
and choose three different consecutive points $x_0,x_1,x_2$ on $\Gamma$.  
By Proposition \ref{Approx of Gamma} we obtain some sequence $\{P^n\}$ of polygonal approximations of $\Gamma$ with $N_n+3$ vertices, which we shall
enumerate in the following manner:  
\begin{equation}  \label{vertices} 
(a^n_0,A^n_1,\ldots,A^n_{l_n};a^n_1;A^n_{l_n+1} ,\ldots, A^n_{m_n};
a^n_2;A^n_{m_n+1},\ldots, A^n_{N_n}),
\end{equation} 
such that $a^n_0 = x_0, a^n_1 = x_1$ and $a^n_2 = x_2$ 
for each $n\in \mathbb{N}$. Now, since each polygonal approximation 
$P^n$ is in generic position, Theorem \ref{Lauras_Theorem}
guarantees the existence for each $n$ of some immersed minimal surface $X^n$ spanning $P^n$, and mapping the three points $-1,-i,1$ onto the last three vertices in~\eqref{vertices}. To establish Theorem~\ref{Main theorem}, we are now going to successively apply Proposition~\ref{Compactness} and a theorem due to Sauvigny (Theorem~1, (ii) in \cite{Sauv1}) to a sequence $\{\tilde X^n\}$ of immersions obtained from $\{X^n\}$ by an appropriate reparametrization.

\bigskip

Since the boundary values of the surfaces $X^n$ are known to map $\partial B$ (weakly) monotonically onto 
$P^n$ with mapping degree $1$ we can estimate the Dirichlet energies, resp. the areas, of the $X^n$ by means of the isoperimetric inequality and (\ref{lenght conv}):
\begin{equation}  \label{Isoperimetric} 
\mathcal{D}(X^n) = \mathcal{A}(X^n) \leq 
\frac{1}{4 \pi} \textnormal{Tot.Var.}(X^n \mid_{\partial B})^2
= \frac{1}{4 \pi} \mathcal{L}(P^n)^2
\leq c\, \mathcal{L}(\Gamma)^2, 
\end{equation} 
for each $n \in \mathbb{N}$, where $c$ is a positive constant. Moreover there is a unique biholomorphic automorphism $\Phi^n: B \stackrel{\cong} \longrightarrow B$, with a unique homeomorphic extension onto $\bar B$, such that the reparametrization $\tilde X^n:= X^n \circ \Phi^n$ maps the three points $-1,-i,1$ onto the three specified vertices $a^n_0 = x_0, a^n_1 = x_1$ and $a^n_2 = x_2$ of $P^n$, which are fixed on $\Gamma$ as $n \rightarrow \infty$. As the automorphism $\Phi^n$ is biholomorphic on $B$ and as its extension $\Phi^n\mid_{\partial B}: \partial B \stackrel{\cong} \longrightarrow \partial B$ performs an orientation preserving homeomorphism, the reparametrized surface $\tilde X^n$ is again an immersed minimal surface spanning $P^n$ which in addition meets the \emph{uniform three-point-condition} of Proposition \ref{Compactness}. And due to $\mathcal{D}(\tilde X^n) = \mathcal{D}(X^n)$ the surface $\tilde X^n$ clearly also
satisfies the estimate (\ref{Isoperimetric}). Thus we can apply Proposition~\ref{Compactness} in order to obtain the existence of some subsequence $\tilde X^{n_l}$ whose boundary values  
$\tilde X^{n_l}\mid_{\partial B}$ satisfy
\begin{displaymath}
\tilde X^{n_l}\mid _{\partial B}\longrightarrow \beta \qquad \textnormal{in } C^0(\partial B, \mathbb{R}^3),
\end{displaymath}
where $\beta:\mathbb{S}^1\longrightarrow \Gamma$ is a continuous, weakly monotonic map onto $\Gamma$ with mapping degree 1. Now, by the maximum principle and Cauchy's estimates we can immediately conclude that the subsequence $\tilde X^{n_l}$ converges in $C^0(\bar B) \cap C^2_{loc}(B)$ to the unique harmonic extension $X^*$ of $\beta$ onto $\bar B$ which is thus again conformally parametrized and bounded by $\Gamma$. 

\bigskip

Now thanks to Theorem~1, (ii) in \cite{Sauv1}, to prove that the harmonic extension $X^*$ is free of interior branch points, it is sufficient to show that there is a constant $e_0\in(0,4\pi)$ such that there holds for every sufficiently large $l$:
\[
\int_B \mid K_{\tilde X^{n_l}} \mid \tilde E^{n_l} \, \dd u\dd v \leq e_0
\]
with $\tilde E^{n_l} := |\tilde X^{n_l}_u|^2 \equiv |\tilde X^{n_l}_v|^2$. We fix some $n$ arbitrarily. We know by Theorem~\ref{Lauras_Theorem} that $X^n$ maps some $N_n-$tuple of points $e^{it^n_1}, e^{it^n_2}, \ldots, e^{it^n_{N_n}} \in \mathbb{S}^1 \cap \{ \Im(w)> 0 \}$, precisely with $0<t^n_1<t^n_2< \ldots < t^n_{N_n}< \pi$, onto the first $N_n$ vertices of $P^n$ in (\ref{vertices}) and the three points $-1,-i,1$ onto the last three vertices of $P^n$. The set of branch points $\mathcal{B}^n$ of $X^n$, which is a subset of the set $\mathcal{S}^n:= \{e^{it^n_1}, e^{it^n_2}, \ldots, e^{it^n_{N_n}}, -1,-i,1 \}$, consists of $s^n \in \{0, 1, \ldots, N_n+3 \}$ points. For simplification of notation and without loss of generality, we shall suppose that these $s^n$ branch points of $X^n$ are adjacent, e.g. $\mathcal{B}^n = \{e^{it^n_1}, e^{it^n_2}, \ldots, e^{it^n_{s^n}} \}$. We shall term the $N_n+3$ exterior angles in the consecutive vertices (\ref{vertices}) of $P^n$: $\eta^n_1, \eta^n_2, \ldots, \eta^n_{N_n+3}$. Now, applying estimate (\ref{Special_Gauss_Bonnet_estimate}) to each surface $X^n$ with 
branch point set $\mathcal{B}^n = \{e^{it^n_1}, e^{it^n_2}, \ldots, e^{it^n_{s^n}} \}$ we obtain:
\begin{equation}  \label{total_Gauss_curvature1}
\int_B \mid K_{X^n} \mid E^n \, \dd u\dd v 
\leq \sum_{j=s^n+1}^{N_n+3} \eta^n_j - 2\pi,
\end{equation}
$\forall \, n \in \mathbb{N}$. Thus, applying the weaker estimate 
(\ref{Special_Gauss_Bonnet_estimate2}) to each surface $X^n$ 
in combination with the requirement that 
$\textnormal{TC}(\Gamma) = 6 \pi - 2\varepsilon$, for some $\varepsilon>0$, we achieve 
by~\eqref{curvature conv} in Proposition \ref{Approx of Gamma} the existence of some large integer $N(\varepsilon)$ such that there holds 
\begin{equation}  \label{total_Gauss_curvature2}
\int_B \mid K_{X^n} \mid E^n \, \dd u\dd v \leq \textnormal{TC}(P^n) - 2\pi
<\textnormal{TC}(\Gamma) + \varepsilon - 2\pi = 4 \pi - \varepsilon,
\end{equation}
for the integral over the (negative) Gaussian curvature $K_{X^n}$ of $X^n$, whenever $n>N(\varepsilon)$. Finally, since there holds $K_{\tilde X^n} = K_{X^n} \circ \Phi^n$ on $B$ for the Gaussian curvature of $\tilde X^n$ on account of the Theorema Egregium and the biholomorphy of $\Phi^n$ we deduce from (\ref{total_Gauss_curvature2}) that the reparametrized minimal surface $\tilde X^{n}$ again satisfies the estimate 
\begin{align*}  
\int_B \mid K_{\tilde X^n} \mid \tilde E^n \, \dd u\dd v 
& =\int_B \mid K_{X^n} \mid \circ \Phi^n \, E^n\circ \Phi^n \, 
\mid (\Phi^n)' \mid^2 \dd u\dd v\\
& \equiv \int_B \mid K_{X^n}\mid \circ \Phi^n \, E^n\circ \Phi^n \, 
\det (D_{(u,v)} \Phi^n) \, \dd u\dd v \\
& = \int_B \mid K_{X^n} \mid \, E^n \, \dd u\dd v < 4 \pi - \varepsilon,
\end{align*}
whenever $n>N(\varepsilon)$. Thus by Theorem~1, (ii) in \cite{Sauv1} due to Sauvigny we may infer that the limit surface $X^*$ in fact inherits the absence of interior branch points of the converging minimal surfaces $\tilde X^{n_l}$, i.e. that $X^*$ is free of branch points on the open unit disc $B$, just as asserted in Theorem~\ref{Main theorem}.\\\\\\
{\bf Acknowledgements}\,\,
The authors would like to thank Prof. Dr. Ulrich Dierkes for having motivated this research, and Beno\^it Kloeckner for fruitful comments about the applied approximation technique.

\end{document}